\documentclass{amsart}

\usepackage{amssymb,xspace}
\usepackage{tikz}
\usepackage{amsthm}
\usepackage[all]{xy}
\usepackage{ifpdf}
\ifpdf\usepackage{hyperref}\else\usepackage[hypertex]{hyperref}\fi

\newtheorem{thm}{Theorem}[section]
\newtheorem{lem}[thm]{Lemma}

\newcounter{x}
\newcounter{y}
\newcounter{z}

\newcommand\xaxis{210}
\newcommand\yaxis{-30}
\newcommand\zaxis{90}

\newcommand\topside[3]{
  \fill[fill=white, draw=black,shift={(\xaxis:#1)},shift={(\yaxis:#2)},
  shift={(\zaxis:#3)}] (0,0) -- (30:1) -- (0,1) --(150:1)--(0,0);
}

\newcommand\leftside[3]{
  \fill[fill=white, draw=black,shift={(\xaxis:#1)},shift={(\yaxis:#2)},
  shift={(\zaxis:#3)}] (0,0) -- (0,-1) -- (210:1) --(150:1)--(0,0);
}

\newcommand\rightside[3]{
  \fill[fill=white, draw=black,shift={(\xaxis:#1)},shift={(\yaxis:#2)},
  shift={(\zaxis:#3)}] (0,0) -- (30:1) -- (-30:1) --(0,-1)--(0,0);
}

\newcommand\cube[3]{
  \topside{#1}{#2}{#3} \leftside{#1}{#2}{#3} \rightside{#1}{#2}{#3}
}

\newcommand\planepartition[1]{
 \setcounter{x}{-1}
  \foreach \a in {#1} {
    \addtocounter{x}{1}
    \setcounter{y}{-1}
    \foreach \b in \a {
      \addtocounter{y}{1}
      \setcounter{z}{-1}
      \foreach \c in {1,...,\b} {
        \addtocounter{z}{1}
        \cube{\value{x}}{\value{y}}{\value{z}}
      }
    }
  }
}

\theoremstyle{plain}

\newtheorem{ex}[thm]{Example}

\numberwithin{equation}{section}
\setcounter{tocdepth}{1}

    \newtheoremstyle{TheoremNum}
        {\topsep}{\topsep}              
        {\itshape}                      
        {}                              
        {\bfseries}                     
        {.}                             
        { }                             
        {\thmname{#1}\thmnote{ \bfseries #3}}
    \theoremstyle{TheoremNum}

\newcommand{\C}{\mathbb{C}}


\DeclareMathOperator{\im}{im}

\DeclareMathOperator{\Hilb}{Hilb}



\newif\ifrem\remtrue

\def\DMO{\DeclareMathOperator}

\DMO{\Exp}{Exp}
\DMO{\Aut}{Aut}
\DMO{\Pow}{Pow}
\def\Re{\operatorname{Re}}

\def\im{\mathrm{im}}




\begin{document}

\title{A Gaussian distribution for refined DT invariants and 3D partitions.}
\author[Andrew Morrison]{Andrew Morrison}
\begin{abstract} We show that the refined Donaldson--Thomas invariants of $\C^3$, suitably normalized, have a Gaussian distribution as limit law. Combinatorially these numbers are given by weighted counts of 3D partitions. Our technique is to use the Hardy--Littlewood circle method to analyze the bivariate asymptotics of a $q$-deformation of MacMahon's function. The proof is based on that of E.M. Wright who explored the single variable case.
\end{abstract}

\maketitle

\section{Introduction.}
In \cite{hm} physicists suggested that the (IIA) string theory associated to a Calabi--Yau threefold $X$ should produce an algebra of BPS states:
\[ \mathcal{H}_{BPS} = \bigoplus_{\gamma\in \Gamma} \mathcal{H}_\gamma  \]
where here the (charge) lattice $\Gamma$ is identified with the even cohomology of $X$, $\Gamma = \bigoplus_{i=0}^3 H^\textrm{2i}(X,\mathbb{Z})$. Moreover each individual vector space $\mathcal{H}_\gamma$ should have an additional $\mathbb{Z}$-grading coming from a symmetry in the little group $\textrm{Spin}(3)$ \cite{f}.
\\ \\
Mathematically, we consider the cohomological Hall algebra \cite{ks} as giving the algebra of BPS states on $X= \C^3$. In this case $\Gamma = H^6(X,\mathbb{Z}) = \mathbb{Z}$ and the $\gamma=n$th piece is given by the critical cohomology of $\Hilb^n(\C^3)$\footnote{In addition we add a single D$6$-brane filing the $\C^3$ or mathematically a framing.}. Moreover each of these vector spaces has a cohomological $\mathbb{Z}$-grading. The Betti numbers of these graded pieces are know as refined DT invariants. These numbers are dependent on the singularities of $\Hilb^n(\C^3)$. For example they do not satisfy a Poincare duality. 
\\ \\
However, on a recent visit to EPFL, T. Hausel shared with me the output of a computer experiment. He conjectured that the refined DT invariants, suitably normalized, would have a Gaussian distribution as limit law, i.e. for large $n$ plotting the Betti numbers against cohomological degree should give the bell curve of a Gaussian distribution (cf. \cite{r2}). The goal of this paper is to prove that conjecture.
\\ \\
In fact this proposal is entirely combinatorial. The Hilbert-Poincare series for the cohomological Hall algebra, computed in \cite{bbs}, equals $M_3(t,q^{1/2})$ where $t$ gives the $\Gamma$-grading, $q$ gives the cohomological grading, and
\[ M_\delta(t,q) = \prod_{m\geq 1} \prod_{k=0}^{m-1} \frac{1}{1-q^{\delta + 2k + 1 - m}t^m} .\]
Expanding this series gives an explicit formula \cite{or} for the $t^n$ coefficent 
\[ \sum_{\pi \vdash n} q^{\delta w_0(\pi) + w_+(\pi) - w_-(\pi)} \]
where the sum is over all plane partitions of $n$ which we now explain.

A plane partition is given by a two dimensional array of positive integers in the first quadrant of $\mathbb{Z}^2$ that are weakly decreasing in both the $x,y$ directions. In this way plane partitions are a generalization of ordinary (line) partitions \cite{a}. The analogue of the Young diagram in this situation is a stack of three dimensional boxes in $\mathbb{Z}^3_{\geq 0}$. Such a collection gives a plane partition if and only if the stack is stable under the pull of gravity along the $(1,1,1)$ axis. For example let $\pi$ be the plane partition given alternatively as

\begin{figure}[ht]
	\begin{minipage}[b]{0.45\linewidth}
	\centering
	$\begin{array}{ccccccc}
	\vdots & \vdots & \vdots & \vdots &  &  & \\
0 & 0 & 0 & 0 &     &  & \\
2 & 2 & 1 & 0 & 0 & \cdots & \\
3 & 3 & 2 & 0 & 0 & \cdots & \\
4 & 3 & 2 & 1 & 0 & \cdots &  \\
5 & 3 & 2 & 1 & 1 & 0 & \cdots 
  \end{array}$
	\caption{integers}
	\label{fig:figure1}
	\end{minipage}
	\hspace{0.5cm}
	\begin{minipage}[b]{0.45\linewidth}
	\centering
	\begin{tikzpicture}[scale=0.3][center]\planepartition{{5,3,2,1,1},{4,3,2,1},{3,3,2},{2,2,1}}\end{tikzpicture}
	\caption{boxes}
	\label{fig:figure2}
	\end{minipage}
	\end{figure}

Here the total sum of the integers/boxes is $|\pi| = 35 $ and we say that the partition has size $35$. The statistics appearing in the above formula for refined DT invariants are,

\[w_+(\pi) = \sum_{i<j} \pi_{i,j}, \hspace{1.0cm}
w_-(\pi) = \sum_{i>j} \pi_{i,j}, \hspace{1.0cm}
 w_0(\pi) = \sum_i \pi_{i,i}.\]

Considering the set $\mathcal{P}_n$ of plane partitions of size $n$ as a sample space with uniform measure we define three random variables
\[ X_{n}^+,X_{n}^-,X_{n}^0 : \mathcal{P}_n \to \mathbb{R} \]
given by $w_+/n^{2/3},w_-/n^{2/3},$ and $w_0/n^{2/3}$. Our main result is the following:
\begin{thm}\label{main'}The distribution of the random vairable
\[ \delta \cdot X_{n}^0 + X_{n}^+ - X_{n}^- \]
for large $n$ has the Gaussian distribution as limit law with
\[ \mu = \delta \zeta(2) /(2\zeta(3))^{2/3} \textrm{ and } \sigma^2 = 1/(2\zeta(3))^{1/3} .\] 
\end{thm}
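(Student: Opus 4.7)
The plan is to show convergence of the characteristic function $\phi_n(\xi) := E[\exp(i\xi Y_n)]$ of $Y_n := \delta X_n^0 + X_n^+ - X_n^-$ to that of $N(\mu,\sigma^2)$ and then invoke L\'evy's continuity theorem. By the generating-function identity recalled above,
\[
\phi_n(\xi) \;=\; \frac{[t^n] M_\delta(t, q_n)}{[t^n] M(t)}, \qquad q_n := e^{i\xi/n^{2/3}},
\]
since $M_\delta(t,1) = M(t)$ and $[t^n]M(t) = |\mathcal{P}_n|$. So the task reduces to obtaining sharp bivariate asymptotics for $[t^n] M_\delta(t, q)$, uniform for $q$ in a shrinking complex neighborhood of $1$.

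I would extract these by extending E.\,M. Wright's single-variable circle-method treatment of $M(t)$ to the bivariate setting. Write $G(s,r) := \log M_\delta(e^{-s}, e^{-r})$, $s_0 := (2\zeta(3)/n)^{1/3}$ (the real saddle of the $r=0$ integrand), and $r_n := -i\xi/n^{2/3}$. Cauchy's formula on the circle $|t| = e^{-s_0}$ splits into a major arc near $t = e^{-s_0}$ and minor arcs. The Gaussian saddle-point approximation on the major arc yields
\[
[t^n] M_\delta(t, e^{-r}) \;\sim\; \frac{\exp(G(s_0, r) + n s_0)}{\sqrt{2\pi\, G_{ss}(s_0, 0)}}
\]
with the prefactors at $r=r_n$ and $r=0$ asymptotic to each other, so the ratio collapses to $\phi_n(\xi) \sim \exp(G(s_0, r_n) - G(s_0, 0))$. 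On the minor arcs, uniform estimates — extending Wright's bounds, which come from the Dedekind eta modular transformation — are needed to make the off-saddle contribution negligible in the ratio.

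To evaluate the limit, expand $G$ in $r$ at $r=0$. Differentiating
\[
G(s,r) \;=\; \sum_{m,j \ge 1} \frac{1}{j}\, e^{-jr\delta}\,\frac{\sinh(jmr)}{\sinh(jr)}\, e^{-jms}
\]
and applying Mellin--Barnes to the resulting Dirichlet series $\zeta(w)\zeta(w-1)$, $\zeta(w-1)^2$, and $\zeta(w-1)\zeta(w-3)$ yields $G_r(s,0) \sim -\delta\zeta(2)/s^2$ and $G_{rr}(s,0) \sim 2\zeta(3)/s^4$ as $s\to 0^+$. Substituting $s_0$ and $r_n$ into $\log\phi_n(\xi) \sim r_n G_r(s_0,0) + \tfrac12 r_n^2 G_{rr}(s_0,0)$ produces exactly
\[
i\xi \cdot \frac{\delta\zeta(2)}{(2\zeta(3))^{2/3}} \;-\; \frac{\xi^2}{2} \cdot \frac{1}{(2\zeta(3))^{1/3}},
\]
the log-characteristic function of $N(\mu,\sigma^2)$. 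Parallel Mellin estimates on $\partial_r^k G(s_0,0)$ for $k \ge 3$ ensure that the Taylor remainder vanishes as $n \to \infty$.

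The main obstacle is the minor-arc analysis. Wright's single-variable bound on $|M(e^{-(s_0+iy)})|$ away from $y=0$ rests crucially on the Dedekind eta modularity; the bivariate version requires uniform estimates of $|M_\delta(e^{-(s_0+iy)}, q)|$ for $q = e^{-r_n}$ with $|r_n|\to 0$. Since $M_\delta$ is a holomorphic perturbation of $M$ at $q=1$ and $r_n \to 0$ uniformly on compact $\xi$-sets, the needed bounds should follow from Wright's by continuity, but this is where the key technical work lies.
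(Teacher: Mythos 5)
Your plan takes a genuinely different route from the paper. The paper first splits the statistic as $\delta X_n^0 + (X_n^+ - X_n^-)$, observes the covariance vanishes by the $i\leftrightarrow j$ symmetry, cites the Kamenov--Mutafchiev theorem for the concentration of the trace $X_n^0$ at the constant $\zeta(2)/(2\zeta(3))^{2/3}$, and then proves $X_n^+ - X_n^- \to \mathcal N(0, (2\zeta(3))^{-1/3})$ by the method of moments. The crucial technical payoff of that decomposition is that after applying $\partial = q\,d/dq$ exactly $k$ times and then setting $q=1$, the generating function becomes $M_k(t) = F_k(t)\,M(t)$, a purely univariate function of $t$; Wright's two lemmas on $M(t)$ then apply verbatim, and only the extra factor $F_k(t)$ needs polynomial control, which comes from elementary Mellin estimates (Lemmas 3.4 and 3.5). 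You instead work with the characteristic function $\phi_n(\xi) = [t^n]M_\delta(t,q_n)/[t^n]M(t)$ of the full statistic, which is cleaner conceptually (no decomposition, no appeal to \cite{km}, and the mean and variance both drop out of one Taylor expansion of $G(s,r)$ in $r$) but in exchange requires genuinely bivariate circle-method estimates. Your major-arc computation is correct: with $q=e^{-r}$, $t=e^{-s}$, one has $\partial_r\log M_\delta|_{r=0} = -\delta\sum_m mt^m/(1-t^m) \sim -\delta\zeta(2)/s^2$ and $\partial_r^2\log M_\delta|_{r=0} \sim \tfrac13\sum_m m(m^2-1)t^m/(1-t^m)^2 \sim 2\zeta(3)/s^4$, and substituting $s_0 = (2\zeta(3)/n)^{1/3}$, $r_n = -i\xi/n^{2/3}$ gives exactly $i\xi\mu - \tfrac12\xi^2\sigma^2$.

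The gap you flag at the end, however, is real and should not be waved away as ``continuity.'' What is needed is a quantitative bound of the form $|M_\delta(e^{-s_0-iy}, e^{-r_n})| \le e^{(\zeta(3) - c_0 + \epsilon)N^2}$ with $c_0>0$, uniformly over the minor arc and over $\xi$ in compacta. Here $N\to\infty$, $s_0 \asymp N^{-1}$, and $|r_n| = |\xi|\,n^{-2/3} \asymp N^{-2}$ are all shrinking simultaneously, so the perturbation in $q$ and the exponential savings $e^{-N^2/2}$ on the minor arc live at scales that must be compared quantitatively; a soft continuity statement at a fixed point does not produce a uniform-in-$N$ estimate. Concretely you would need to bound $\bigl|\log M_\delta(t,q_n) - \log M(t)\bigr|$ by $o(N^2)$ across the whole minor arc (or redo Wright's Lemma II with the perturbed integrand). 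This is exactly the work that the paper's decomposition-plus-moments strategy is engineered to avoid: by differentiating in $q$ before specializing to $q=1$, all the $q$-dependence is peeled off into the polynomially bounded factor $F_k(t)$, and the exponentially large factor $M(t)$ is the same one Wright already controlled. If you carry out the uniform minor-arc estimate, your route gives a self-contained proof of the full theorem (including the trace part) in one stroke; but as written the minor-arc step is a genuine missing piece, not a formality.
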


\section{Setup.}
First we split the problem into two parts, one of which has already been solved. Straight away we see that the covariance of $X_{n}^0$ and $X_{n}^+ - X_{n}^-$ is zero due to symmetry
\begin{eqnarray*} 
\mathbb{E}((X_{n}^0 - \mu_{X_{n}^0})(X_{n}^+ - X_{n}^- - \mu_{X_{n}^+ - X_{n}^-} )) & = & \mathbb{E}((X_{n}^0- \mu_{X_{n}^0})(X_{n}^+ - X_{n}^- )) \\ 
& = & \mathbb{E}(X_{n}^0(X_{n}^+ - X_{n}^-))  - \mathbb{E}(\mu_{X_{n}^0}(X_{n}^+ - X_{n}^-)) \\ 
& = & \mathbb{E}(X_{n}^0 X_{n}^+ - X_{n}^0 X_{n}^-) -\mu_{X_{n}^0}(\mathbb{E}(X_{n}^+ - X_{n}^-))\\ 
& = & 0 - 0.
\end{eqnarray*}
The following is a result of E.P. Kamenov and L.R. Mutafchiev:
\begin{thm}[\cite{km}]Let $ a= \zeta(2)/ (2\zeta(3))^{2/3} $ and $ b = \sqrt{1/3} / (2\zeta(3))^{1/3}$. Then as $n\to \infty$ we have
\[ X^0_n  \sim \mathcal{N}(a,bn^{-\frac{1}{3}}\ln^{\frac{1}{2}}n) .\]
\end{thm}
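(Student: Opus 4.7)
The plan is to follow the Hardy--Littlewood circle method (the bivariate saddle-point framework used later in the paper), applied to the joint generating function of $|\pi|$ and the diagonal trace. A clean product identity is available: specializing the $M_\delta$ formula from the Introduction by setting $q = y^{1/\delta}$ and sending $\delta \to \infty$ (so that $(w_+ - w_-)/\delta \to 0$ pointwise while $\delta w_0 \cdot (1/\delta) = w_0$ survives in the exponent) yields
\[
G(t,y) \;:=\; \sum_\pi t^{|\pi|}\, y^{w_0(\pi)} \;=\; \prod_{m \geq 1} \frac{1}{(1 - y t^m)^m},
\]
which reduces to MacMahon's product at $y=1$ and can be verified directly at low orders in $t$.

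Next, compute the cumulants of $w_0$ in the grand-canonical ensemble $\propto t^{|\pi|}$. Writing $L(y) := \log G(t,y) = \sum_m m \sum_{k \geq 1}(yt^m)^k/k$, the first two derivatives are $L'(1) = \sum_m m t^m/(1-t^m)$ and $L''(1) = \sum_m m t^{2m}/(1-t^m)^2$, so $\mathbb{E}_t[w_0] = L'(1)$ and $\mathrm{Var}_t[w_0] = L'(1) + L''(1)$. Substituting $t = e^{-s}$ and applying Mellin inversion, the Dirichlet series for $L'(1)$ is $\Gamma(w)\zeta(w)\zeta(w-1)$, whose simple pole at $w=2$ gives $L'(1) \sim \zeta(2)\, s^{-2}$. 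The series for $L''(1)$ is $\Gamma(w)\zeta(w-1)(\zeta(w-1)-\zeta(w))$, and the \emph{double} pole at $w=2$ (from $\zeta(w-1)^2$), combined with the expansion $s^{-w} = s^{-2}(1 - (w-2)\log s + \cdots)$, produces $L''(1) \sim -s^{-2}\log s$. At the saddle $s_n = (2\zeta(3))^{1/3} n^{-1/3}$, and using $-\log s_n \sim (1/3)\log n$, these recover mean $\sim a n^{2/3}$ and variance $\sim b^2 n^{2/3}\log n$ with $b^2 = 1/(3(2\zeta(3))^{2/3})$, matching the theorem's normalization exactly.

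For the distributional statement, represent the characteristic function of $w_0$ as a Cauchy integral,
\[
\mathbb{E}_n\!\left[e^{i u w_0}\right] \;=\; \frac{[t^n]\, G(t, e^{i u})}{[t^n]\, G(t, 1)},
\]
set $u = \tau/(b n^{1/3}\log^{1/2}n)$, integrate on $|t| = e^{-s_n}$, and split into major and minor arcs. On the major arc, Taylor-expand $\log G(t, e^{i u})$ in $u$ to quadratic order (higher orders are $o(1)$ under this scaling) and carry out the classical saddle-point evaluation in the $t$-direction, producing the Gaussian factor $\exp\bigl(i u\, a n^{2/3} - \tfrac12 u^2 b^2 n^{2/3}\log n + o(1)\bigr)$; after centering by $a n^{2/3}$ this is $\exp(-\tau^2/2 + o(1))$. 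On the minor arcs, Meinardus-type estimates bound $|G(t, e^{i u})|$ exponentially below its saddle value, so that contribution is negligible. L\'evy's continuity theorem then converts pointwise convergence of the characteristic function into convergence in distribution.

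The main obstacle is the Mellin computation at the double pole of $\Gamma(w)\zeta(w-1)^2$, which is the sole origin of the atypical $\log^{1/2}n$ factor in the standard deviation. Correctly extracting the leading term $-s^{-2}\log s$ requires tracking the $1/(w-2)^2$ singularity against the kernel $s^{-w}$ and combining with the subleading simple-pole contribution from $-\Gamma(w)\zeta(w-1)\zeta(w)$; the factor $1/3$ in $b^2$ then appears naturally through $-\log s_n \sim (1/3)\log n$. Once this residue calculation is in hand, the remaining saddle-point and minor-arc estimates run in parallel with the classical circle-method treatment of MacMahon's function, and the Gaussian limit law follows.
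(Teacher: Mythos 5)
This statement is not proved in the paper at all: it is quoted verbatim from Kamenov--Mutafchiev \cite{km}, and the paper uses it as a black box (only its own Theorem~\ref{main} is proved here). So there is no internal proof to compare against; what you have written is essentially a reconstruction of the argument in the cited reference, and as a sketch it is sound. The trace generating function $\sum_\pi t^{|\pi|}y^{w_0(\pi)}=\prod_{m\geq1}(1-yt^m)^{-m}$ is correct (it is Stanley's trace identity; your $\delta\to\infty$ specialization of $M_\delta$ is a legitimate way to see it, since each coefficient of $t^n$ is a finite sum), your cumulant formulas $L'(1)=\sum_m mt^m/(1-t^m)$, $L''(1)=\sum_m mt^{2m}/(1-t^m)^2$ are right, the Mellin integrands $\Gamma(w)\zeta(w)\zeta(w-1)$ and $\Gamma(w)\zeta(w-1)(\zeta(w-1)-\zeta(w))$ are right, and the double pole at $w=2$ is indeed the sole source of the anomalous $\ln^{1/2}n$; with $s_n=(2\zeta(3))^{1/3}n^{-1/3}$ the constants $a$ and $b^2=1/(3(2\zeta(3))^{2/3})$ come out exactly as in the statement.

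One point deserves more care than your sketch gives it: the mean and variance you compute are grand-canonical (weight $t^{|\pi|}$), whereas the theorem is about the uniform measure on $\mathcal{P}_n$. Conditioning on $|\pi|=n$ reduces the variance by roughly $\mathrm{Cov}_t(w_0,|\pi|)^2/\mathrm{Var}_t(|\pi|)$, and one must check this is negligible: here $\mathrm{Cov}_t(w_0,|\pi|)\asymp s^{-3}$ and $\mathrm{Var}_t(|\pi|)\asymp s^{-4}$, so the correction is $O(s^{-2})=O(n^{2/3})$, which is indeed lower order than the $n^{2/3}\ln n$ variance --- but this estimate (equivalently, the cross term $iu\,\mathrm{Cov}(w_0,|\pi|)$ shifting the $t$-saddle in your characteristic-function integral) has to be carried out explicitly; it is exactly what the joint major-arc expansion must absorb. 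Similarly, the minor-arc bound for $G(t,e^{iu})$ with $u\to0$ needs a uniform Meinardus/Wright-type estimate rather than the $u=0$ case alone. With those two steps filled in, the argument is complete and matches the parameters of the cited theorem.
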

Now the sum of two Gaussian random variables is again Gaussian with mean the sum of the means and variance the sum of the variances plus a covariance term. Since the above covariance was zero, Theorem \ref{main'} will now follow from the result just mentioned together with:
\begin{thm}\label{main} Let $c = 1/(2\zeta(3))^{1/3}$. Then as $n\to \infty$ we have
\[ X_{n}^+ - X_{n}^- \sim \mathcal{N}(0,c). \]
\end{thm}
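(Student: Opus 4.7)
\medskip

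\noindent\textbf{Proof plan.} The plan is to compute the characteristic function of $X_n^+ - X_n^-$ directly from the generating series $M_0(t,q)$ and show it converges to $\exp(-cs^2/2)$, the characteristic function of $\mathcal{N}(0,c)$. By definition
\[
\mathbb{E}\bigl(e^{is(X_n^+ - X_n^-)}\bigr) \;=\; \frac{[t^n]\,M_0\bigl(t,e^{is/n^{2/3}}\bigr)}{[t^n]\,M_0(t,1)},
\]
and both coefficients will be extracted by Cauchy's formula
\[
[t^n]\,M_0(t,q) \;=\; \frac{e^{nr}}{2\pi}\int_{-\pi}^{\pi} M_0\bigl(e^{-r+i\phi},q\bigr)\,e^{-in\phi}\,d\phi,
\]
integrated on a circle of radius $e^{-r}$, with $r = r_n := (2\zeta(3)/n)^{1/3}$ chosen as the saddle of the MacMahon denominator (this is precisely Wright's choice for the univariate asymptotics of $p(n)$). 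Setting $q=e^{i\theta}$ with $\theta=s/n^{2/3}$, the task reduces to a bivariate saddle-point estimate of the numerator and a careful comparison with the known denominator.

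\medskip

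\noindent\textbf{Bivariate asymptotics via Mellin.} The first step is to establish the expansion
\[
\log M_0(e^{-r},e^{i\theta}) \;=\; \frac{\zeta(3)}{r^2} \;-\; \frac{\zeta(3)\,\theta^{2}}{r^{4}} \;+\; O\bigl(\theta^{4}/r^{6}\bigr) \;+\; (\text{subdominant in } r),
\]
uniformly for $|\theta| \le r^{2-\epsilon}$ say. Using the identity $\sum_{k=0}^{m-1} q^{j(2k+1-m)} = \sin(jm\theta)/\sin(j\theta)$, one writes
\[
\log M_0(e^{-r},e^{i\theta}) \;=\; \sum_{m,j\ge 1}\frac{e^{-rmj}}{j}\cdot\frac{\sin(jm\theta)}{\sin(j\theta)},
\]
and Taylor expansion in $\theta$ gives the leading correction $-(\theta^{2}/6)\sum_{m,j}jm(m^{2}-1)e^{-rmj}$. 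The Mellin transform of this last sum equals $\Gamma(s)[\zeta(s-1)\zeta(s-3)-\zeta(s-1)^{2}]$, whose rightmost pole at $s=4$ has residue $6\zeta(3)$, producing the stated $\zeta(3)\theta^{2}/r^{4}$ term. Plugging $r=r_n$ and $\theta=s/n^{2/3}$, this correction contributes exactly $-s^{2}/(2(2\zeta(3))^{1/3}) = -cs^{2}/2$ to the log of the ratio.

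\medskip

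\noindent\textbf{Contour estimates.} The main obstacle, and the step closely modelled on Wright's treatment of MacMahon's function, is controlling the integral over the minor arc $|\phi|\ge r_n^{1+\delta}$. Here one must show that $|M_0(e^{-r_n+i\phi},e^{i\theta})|$ is exponentially smaller than $M_0(e^{-r_n},1)$, uniformly for $\theta$ in a compact range. This follows from a lower bound on $\Re(\,-\log M_0(e^{-r_n+i\phi},e^{i\theta}) + \log M_0(e^{-r_n},1)\,)$ analogous to Wright's, obtained term-by-term from the product formula; the fact that $e^{i\theta}$ lies on the unit circle keeps the individual factors $|1-e^{i\theta(2k+1-m)}e^{-rm+im\phi}|$ bounded below by the $\theta=0$ estimates, up to harmless constants. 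On the major arc $|\phi|\le r_n^{1+\delta}$ one substitutes the bivariate expansion above, the linear and quadratic terms in $\phi$ produce a Gaussian integral that, after division by its $\theta=0$ counterpart, gives $1+o(1)$, while the $\theta$-dependent correction survives as the factor $\exp(-cs^{2}/2)$.

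\medskip

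\noindent\textbf{Conclusion.} Combining the major- and minor-arc estimates yields
\[
\mathbb{E}\bigl(e^{is(X_n^{+}-X_n^{-})}\bigr) \;\longrightarrow\; e^{-cs^{2}/2}
\]
pointwise in $s\in\mathbb{R}$, and L\'evy's continuity theorem delivers the convergence in distribution. I expect the genuine work to lie entirely in the minor-arc bound: the Mellin computation of the quadratic coefficient is mechanical, and the saddle-point geometry on the major arc is identical to Wright's once the expansion is in hand, but verifying that the Wright-type bounds survive uniformly in the additional parameter $\theta$ requires redoing his contour estimates with care.
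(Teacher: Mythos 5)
Your route to the limit law is genuinely different from the paper's. The paper uses the method of moments: it differentiates first, writes $\partial^k M_0(t,q)|_{q=1} = F_k(t)\cdot M(t)$ so that MacMahon's function $M(t)$ appears untouched and Wright's Lemmas I and II can be cited verbatim, and then runs a separate Mellin/circle-method analysis on the $q$-independent factor $F_k(t)$, with a Wick-style argument producing the $(k-1)!!$ combinatorics and matching the even Gaussian moments. You instead compute the characteristic function $\mathbb{E}\bigl(e^{is(X_n^+ - X_n^-)}\bigr)$ directly, keep $q=e^{i\theta}$ alive on the Cauchy contour, and appeal to L\'evy's continuity theorem. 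Your Mellin computation of the quadratic coefficient is correct: $\sum_{k=0}^{m-1}e^{ij\theta(2k+1-m)}=\sin(jm\theta)/\sin(j\theta)$, the $\theta^2$ coefficient of $\log M_0$ is $-\tfrac16\sum_{m,j}jm(m^2-1)e^{-rmj}$, whose Mellin transform $\Gamma(s)\zeta(s-1)\bigl(\zeta(s-3)-\zeta(s-1)\bigr)$ has residue $6\zeta(3)$ at $s=4$, and substituting Wright's saddle radius $r_n=(2\zeta(3)/n)^{1/3}$ and $\theta=s/n^{2/3}$ gives exactly $-cs^2/2$. What your approach buys is economy: a single saddle-point estimate carries the whole distribution, and the increasingly elaborate bookkeeping of higher moments never appears. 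What it costs is that Wright's minor-arc Lemma must be re-established for $M_0(e^{-r+i\phi},e^{i\theta})$ uniformly in the new parameter $\theta$ near $0$, whereas the paper's differentiate-first trick is designed precisely to sidestep this, since the $q$-dependence lives entirely in the mild factor $F_k$ and the singular factor $M(t)$ is the one Wright already controlled.

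You flag the minor-arc uniformity honestly, and it is the real gap. Your sentence that the factors $|1-e^{i\theta(2k+1-m)}e^{-rm+im\phi}|$ are ``bounded below by the $\theta=0$ estimates up to harmless constants'' needs an actual argument: the trivial inequality $|1-z|\ge 1-|z|$ only recovers the major-arc upper bound and says nothing about the crucial $e^{-N^2/2}$ saving Wright extracts from phase cancellation when $\phi$ is on the minor arc. Since $\theta=s/n^{2/3}\asymp r_n^2$ is a vanishingly small phase perturbation relative to $m\phi$ there, the claim is very plausible, but verifying that the cancellation survives uniformly in $s$ on compacts is exactly the work Wright's original proof does not hand you for free. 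Until that lemma is supplied the argument is a credible and well-aimed outline rather than a complete proof.
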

To prove this result we use the method of moments. That is we show that the limiting distribution has the same moments as a Gaussian random variable with variance $1/(2\zeta(3))^{1/3}$. Specifically we will show that in the limit
\[ \mathbb{E}\left(\left( X_{n}^+ - X_{n}^- \right)^{k}\right) = \left\{ \begin{array}{cl} 0 & \textrm{ if $k$ is odd, } \\  (k-1)!! (2\zeta(3))^{-k/6} & \textrm{ if $k$ is even. } \end{array} \right. \]
Consider the generating series 
\[ M_0(t,q) = \prod_{m\geq 0} \prod_{k=0}^{m-1} \frac{1}{1-q^{2k+1-m}t^m} = \sum_{\pi} q^{w_+(\pi)-w_-(\pi)}t^{|\pi|} \]
and let $p_n(q)$ be the coefficient of $t^n$ then we have 
\[ \mathbb{E}\left(\left( X_{n}^+ - X_{n}^- \right)^{k}\right) = n^{-2k/3} \frac{\left. \partial^k p_n(q)\right|_{q=1} }{p_n(q)|_{q=1}} \]
where $\partial = q\frac{d}{dq}$. Notice by symmetry this already implies that all the odd moments vanish. The method of proof given in the next section follows the proof of E.Wright \cite{w} who provided an asymptotic formula\footnote{Typo warning: Wrights formula on page 179 is missing factor of $\sqrt{3}$ found at the end of his proof on page 189.} for the number of plane partitions of $n$
\[ p_n(q)|_{q=1} \sim \frac{\zeta(3)^{7/36}}{2^{11/36}\sqrt{3\pi}n^{25/36}}e^{ 3\left(\frac{\zeta(3)}{4}\right)^{1/3}(n^{4/3})^{1/2} + \zeta'(-1)}. \]
Wright's proof in turn generalized the pioneering work of Hardy and Ramanujan \cite{hr} who first applied the Hardy-Littlewood circle method to get an asymptotic formula for the number of ordinary partitions. Using this method in the next section we will show that 
\[ \partial^k p_n(q) |_{q=1} \sim n^{2k/3} \cdot (k-1)!! (2\zeta(3))^{-k/6} \cdot \frac{\zeta(3)^{7/36}}{2^{11/36}\sqrt{3\pi}n^{25/36}}e^{ 3\left(\frac{\zeta(3)}{4}\right)^{1/3}(n^{4/3})^{1/2} + \zeta'(-1)} \]
when $k$ is even. This gives the correct moments and shows that $X_{n}^+ - X_{n}^-$ has a Gaussian limit law as promised. 

\section{Proof.}
As explained in the previous section we are going to use the Hardy--Littlewood circle method to estimate the coefficients in the generating series
\[ M_k(t) := \partial^k M_0(t,q) |_{q=1}. \]
Given any function $A(t) = \sum_{n\geq 0} a_nt^n$ analytic on the interior of the unit disk we can compute the $n$th coefficient in its MacLaurin series using the Cauchy formula
\[ a_n = \frac{1}{2\pi i } \int_{C_N} t^{-n-1} A(t)dt   \]
where $C_N$ is the circle or radius $e^{-1/N}$. The idea of the circle method is that by understanding the singularities of $A$ on the unit circle one can approximate this integral by an integral over a small subarc of the circle when $N,n \gg 1$. 
\\ \\
For example when $A(t) = M(t) = M(t,q)|_{q=1}$ is MacMahon's function then letting $t=e^z$ Wright defines the \textit{major arc} $C'_N$ to be the points such that $\im(z)<1/N$ and the \textit{minor arc} $C''_N$ to be the remaining points on the circle $C_N$. In some sense MacMahon's function is most singular at $z=0$ and so the integral over $C_N$ is well approximated by that over the small arc $C'_N$. The following two Lemmas\footnote{Wright's first Lemma is more refined than this. We mearly extract his leading order approximation suitable for our purposes.} make this precise and will be very useful to us later:
\begin{lem}[E.M.Wright \cite{w} Lemma I]\label{majorM} There exists constants $N_0,K$ so that for all $N>N_0$ and $t = e^z \in C'_N$ along the major arc we have \[ \left| M(t) - e^{-\zeta'(1)}z^{1/12} e^{\zeta(3)/z^2}\right| < K N^{-1/12-2} e^{\zeta(3)N^2}. \] 
\end{lem}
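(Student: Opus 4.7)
The plan is to reproduce Wright's derivation of the asymptotics of MacMahon's function near $z=0$ via a Mellin--Barnes contour integral. Setting $t=e^z$ with $\Re z<0$ and taking logarithms,
\[ \log M(e^z) = -\sum_{n\geq 1} n\log(1-e^{nz}) = \sum_{n,k\geq 1}\frac{n}{k}\,e^{nkz}, \]
one can apply the Cahen--Mellin identity $e^{-u}=\frac{1}{2\pi i}\int_{c-i\infty}^{c+i\infty}\Gamma(s)u^{-s}\,ds$ (valid for $c>0$, $\Re u>0$) to each exponential. Since $\sum_{n,k\geq 1}(n/k)(nk)^{-s}=\zeta(s-1)\zeta(s+1)$, this yields, for any $c>2$,
\[ \log M(e^z) = \frac{1}{2\pi i}\int_{c-i\infty}^{c+i\infty}\Gamma(s)\,\zeta(s-1)\,\zeta(s+1)\,(-z)^{-s}\,ds. \]

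I would then push the contour leftward to a vertical line $\Re s=-A$ with $A\geq 2$, collecting residues from the two poles in between. At $s=2$, the simple pole of $\zeta(s-1)$ contributes $\Gamma(2)\zeta(3)(-z)^{-2}=\zeta(3)/z^2$. At $s=0$ the factors $\Gamma(s)$ and $\zeta(s+1)$ together produce a double pole; Laurent-expanding $\Gamma(s)=s^{-1}-\gamma+O(s)$, $\zeta(s+1)=s^{-1}+\gamma+O(s)$, $\zeta(s-1)=-\tfrac{1}{12}+\zeta'(-1)s+O(s^2)$ and $(-z)^{-s}=1-s\log(-z)+O(s^2)$, and extracting the coefficient of $s^{-1}$ from the product, yields the contribution $\tfrac{1}{12}\log(-z)+\zeta'(-1)$. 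Altogether,
\[ \log M(e^z) = \frac{\zeta(3)}{z^2}+\frac{1}{12}\log(-z)+\zeta'(-1)+R_A(z), \]
where $R_A(z)$ is the remaining integral along $\Re s=-A$.

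The third step is to bound $R_A(z)$. Stirling's formula gives exponential decay of $|\Gamma(s)|$ in $|\im s|$, the functional equation bounds $|\zeta(s\pm 1)|$ polynomially in $|\im s|$, and $|(-z)^{-s}|=|z|^{A}e^{\arg(-z)\im s}$ stays controlled provided $|\arg(-z)|<\pi/2$. On the major arc $z=-1/N+i\theta$ with $|\theta|<1/N$ one has $|\arg(-z)|\leq\pi/4$ and $|z|\leq\sqrt{2}/N$, so the standard estimate gives $|R_A(z)|\ll|z|^{A}\ll N^{-A}$. Exponentiating the asymptotic expansion produces
\[ M(e^z) = e^{\zeta'(-1)}(-z)^{1/12}e^{\zeta(3)/z^2}\bigl(1+O(N^{-A})\bigr), \]
and combining $|(-z)^{1/12}|\ll N^{-1/12}$ with $|e^{\zeta(3)/z^2}|\leq e^{\zeta(3)N^2}$ yields, for $A\geq 2$, the claimed inequality.

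The main obstacle is the residue calculation at the double pole $s=0$: every factor in the integrand must be expanded to first nontrivial order, the four-fold product carefully assembled, and the coefficient of $s^{-1}$ cleanly extracted, so as to recover both the logarithmic term responsible for the $z^{1/12}$ prefactor and the additive constant responsible for $e^{\zeta'(-1)}$ after exponentiation. A secondary technical nuisance is uniformity in $z$ of the remainder estimate: one must check that $|\arg(-z)|$ remains bounded strictly below $\pi/2$ throughout the major arc so that the factor $e^{\arg(-z)\im s}$ does not overwhelm the exponential decay of $\Gamma(s)$ along the shifted vertical contour.
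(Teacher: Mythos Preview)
The paper does not actually prove this lemma: it is quoted verbatim from Wright's 1931 paper and used as a black box. Your Mellin--Barnes argument is exactly Wright's method and is correct in outline; in particular the double-pole computation at $s=0$ is right and recovers the $\tfrac{1}{12}\log(-z)+\zeta'(-1)$ term (the paper's $e^{-\zeta'(1)}$ is a typo for $e^{\zeta'(-1)}$, as you implicitly note, since $\zeta'(1)$ does not exist).

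One small technical correction: you cannot simply shift to $\Re s=-A$ with $A\geq 2$, because the integrand has a further simple pole at $s=-2$ (the pole of $\Gamma$ there is \emph{not} cancelled, since $\zeta(-3)\neq 0$ and $\zeta(-1)\neq 0$; the cancellations happen only at odd negative integers via the trivial zeros of $\zeta(s-1)$). To obtain the stated $N^{-2}$ saving you must either pick up the residue at $s=-2$ as an additional $O(z^2)=O(N^{-2})$ term in $\log M$ and then shift to some $2<A<4$, or shift to $A=2-\varepsilon$ and accept $O(N^{-2+\varepsilon})$, which is still adequate for every later use in the paper. This is a bookkeeping point, not a gap in the method.
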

\begin{lem}[E.M.Wright \cite{w} Lemma II]\label{minorM} Given any $\epsilon > 0$ there exists an $N_0$ such that for all $N>N_0$ and $t\in C''_N$ along the minor arc we have
\[ |M(t)| < e^{(\zeta(3) -1/2 +\epsilon)N^2} . \]
\end{lem}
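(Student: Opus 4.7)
The plan is to bound $\log|M(t)|$ uniformly on the minor arc by applying the same Mellin-Barnes technology that underlies Wright's treatment of the major arc, but now locally at every unit-circle singularity of $M$, and then patching the estimates together by a Farey-type dissection.

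The core input is the Mellin-Barnes representation
\[
\log M(e^z) \;=\; \frac{1}{2\pi i}\int_{(c)} \Gamma(s)\,\zeta(s+1)\,\zeta(s-1)\,(-z)^{-s}\,ds, \qquad c>2,\ \Re(z)<0,
\]
which follows from the Dirichlet series identity $\sum_n \sigma_2(n)/n^{s+1} = \zeta(s-1)\zeta(s+1)$ and the Mellin transform of $e^{-nx}$. Shifting the contour to the left, I pick up a simple pole at $s=2$ contributing $\zeta(3)/z^2$ and a double pole at $s=0$ contributing $\tfrac{1}{12}\log(-z)+\zeta'(-1)$; further poles of $\Gamma$ at $s=-1,-3,\dots$ are cancelled by trivial zeros of $\zeta(s-1)$. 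This gives the uniform expansion
\[
\log M(e^z) \;=\; \frac{\zeta(3)}{z^2} + \frac{\log(-z)}{12} + \zeta'(-1) + O(|z|^2) \qquad (|z|\to 0).
\]
For $z=-1/N+i\theta$ on the minor arc with $|z|\le\delta$, a direct computation gives $\Re(1/z^2) = (1/N^2-\theta^2)/(1/N^2+\theta^2)^2 \le 0$ whenever $|\theta|\ge 1/N$, so the expansion above forces $\log|M(e^z)|=O(\log N)$ in this regime, well within the target.

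For the remaining minor-arc points, where $|z|$ is bounded below, I would perform a Farey dissection: assign to each $\theta$ its best rational approximation $p/q$ and compare $t$ to the nearest root of unity $\omega=e^{2\pi i p/q}$ with $q\ge 2$. Substituting $t=\omega e^{z'}$ and repeating the Mellin-Barnes computation for the twisted Dirichlet series $\sum_n \omega^n \sigma_2(n)/n^{s+1}$, which decomposes into Hurwitz zeta and polylogarithm pieces modulo $q$, the residue at $s=2$ collapses via the identity $\sum_{j=0}^{q-1}\operatorname{Li}_3(\omega^j)=\zeta(3)/q^2$ to the single value $\zeta(3)/q^3$. This yields the local expansion
\[
\log M(\omega e^{z'}) \;=\; \frac{\zeta(3)}{q^3\,(z')^2} + O(\log|z'|) \qquad (|z'|\to 0),
\]
and hence $\log|M(t)|\le \zeta(3)N^2/q^3 + o(N^2)$ on the sub-arc assigned to $p/q$. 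For bounded $q\ge 2$ this gives at worst $\log|M(t)|\le \zeta(3)N^2/8+o(N^2)$, while for large $q$ the factor $1/q^3$ is arbitrarily small.

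The main obstacle is making the local Mellin analysis uniform in $q$ and patching the estimates cleanly via the Farey dissection, since the neighbourhoods of different roots of unity must cover the minor arc without double counting or losing control of the error terms. The numerical margin that makes the target bound comfortable is the inequality $\zeta(3)/q^3<\zeta(3)-1/2$ for every $q\ge 2$, whose worst case $q=2$ contributes only $\zeta(3)/8\approx 0.15$ versus the available budget $\zeta(3)-1/2\approx 0.70$; any error of order $o(N^2)$ is easily absorbed into $\epsilon N^2$.
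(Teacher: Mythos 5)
The paper itself does not prove this statement: it is quoted verbatim as Wright's Lemma II, and Wright's own argument is elementary --- he compares $\log|M(t)|$ with $\log M(|t|)=\zeta(3)N^2+O(\log N)$ and extracts the saving from a single subfamily of terms, namely $\log M(|t|)-\log|M(t)|\ge\sum_{m\ge1}me^{-m/N}(1-\cos m\theta)=\Re\bigl(\tfrac{1}{4}\sinh^{-2}(\tfrac{1}{2N})-\tfrac{1}{4}\sinh^{-2}(\tfrac{z}{2})\bigr)$, where on the minor arc $\tfrac14|\sinh(z/2)|^{-2}\le(\tfrac12+\epsilon)N^2$ while the first term is $(1-o(1))N^2$; this is exactly where the constant $1/2$ in the statement comes from. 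Your route (Mellin--Barnes at every root of unity plus a Farey dissection) is far heavier machinery than the lemma needs, and as written it has a genuine gap in each of its two regimes.

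In regime (a) your key deduction is false as stated. The expansion $\log M(e^z)=\zeta(3)z^{-2}+\tfrac1{12}\log(-z)+\zeta'(-1)+O(|z|^2)$ is \emph{not} uniform as $\arg(-z)\to\pm\pi/2$: after shifting the contour, the remainder carries a factor $\int|\Gamma(s)\zeta(s+1)\zeta(s-1)|e^{t\arg(-z)}\,dt$ which blows up polynomially in $N|\theta|$ precisely when $z=-1/N+i\theta$ with $|\theta|\gg 1/N$, i.e.\ precisely on the minor arc. Indeed the conclusion ``$\log|M(e^z)|=O(\log N)$ for $1/N\le|\theta|\le\delta$'' contradicts your own regime-(b) analysis: at $z=-1/N+2\pi i/q$ with $2\pi/q\le\delta$ one has $\log|M|\sim\zeta(3)N^2/q^3$, which is of genuine order $N^2$ (with a small constant), not $O(\log N)$. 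This can be repaired --- e.g.\ shift only past $s=2$ and track the $e^{t\arg(-z)}$ factor to get an error $O(N^2|\theta|^{3/2})$, which is an admissible $O(\delta^{3/2}N^2)$ --- but the step as you wrote it does not hold. In regime (b), the residue computation $\zeta(3)/q^3$ at $s=2$ is correct, but the error ``$O(\log|z'|)$'' is again not uniform in $q$ (the secondary poles contribute terms growing like powers of $q$, and on a Farey dissection of order $\sim N$ one has $q$ as large as $N$ and arc half-widths $\asymp 1/(qN)$), and the covering/patching of the minor arc with controlled errors is exactly the point you defer as ``the main obstacle.'' So what you have is a plausible plan whose only fully argued step is incorrect as stated and whose substantive technical content (uniformity in $q$ and the dissection bookkeeping) is left unproven; given that the needed saving is only $\tfrac12 N^2$, Wright's two-line comparison argument is both simpler and sufficient.
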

Using Lemma \ref{minorM} one shows that the integral along the minor arc is relatively small. Then using Lemma \ref{majorM} the integral along the major arc can be approximated using the curve of steepest decent. This gives Wright's asymptotic formula mentioned earlier \cite{w} and illustrates the idea of the circle method.
\\ \\ 
Recall, we are specifically interested in computing the coefficients of the series $M_k(t) = \partial^k M_0(t,q) |_{q=1}$ defined at the start of the section as a means to compute the moments of the random variables $X^+_n-X^-_n$. Let us write this series as $M_k(t) = F_k(t)\cdot M(t)$ then by Wright's two lemmas above we have a good understanding of the singularities of the factor $M(t)$ it remains to analyze $F_k(t)$. 
\begin{ex}\label{exF2}Computing $F_2(t)$. Let us differentiate $M_0(t,q)$ twice using $\partial = q\frac{d}{dq}$ this gives 
\[ \left( \sum_{m\geq 1} \sum_{k=0}^{m-1} \frac{(1-m+2k)^2q^{4k-2m+2}t^{2m}}{(1-q^{2k+1-m}t^m)^2} + \frac{(1-m+2k)^2q^{2k-m+1}t^{m}}{(1-q^{2k+1-m}t^m)}  \right)\cdot M_0(t,q)\]
\[ + \left( \sum_{m\geq 1} \sum_{k=0}^{m-1} \frac{ (1-m+2k)q^{2k-m+1}t^m }{1- q^{2k-m+1}t^m} \right)^2\cdot M_0(t,q). \]
Then setting $q=1$ we get
\[\left(\sum_{m\geq 1} \sum_{k=0}^{m-1} \frac{(1+2k-m)^2t^m}{(1-t^m)^2} \right)\cdot M(t) = \frac{1}{3}\cdot\left(\sum_{m\geq 1} \frac{m(m^2-1)t^m}{(1-t^m)^2}\right) \cdot M(t) \]
deducing that
\[ F_2(t) = \frac{1}{3}\sum_{m\geq 1} \frac{m(m^2-1)t^m}{(1-t^m)^2}. \]
\end{ex}
In the next two Lemmas we analyze the behavior of $F_k(t)$ along the major and minor arcs.
\begin{lem}\label{majorF}Given $k$ even, there exist constants $N_0,K$ such that for all $N>N_0$ and $t=e^z\in C'_N$ along the major arc we have
\[ \left|F_k(t) - (k-1)!!(2\zeta(3))^{k/2}z^{-2k} \right| < K N^{2k-2}. \] 
\end{lem}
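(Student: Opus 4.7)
The plan is to apply Faà di Bruno to $M_0=\exp(\log M_0)$ and rewrite
\[
F_k(t) \;=\; \frac{\partial^k M_0}{M_0}\bigg|_{q=1} \;=\; B_k\bigl(c_1(t),\ldots,c_k(t)\bigr),
\]
where $B_k$ is the complete (exponential) Bell polynomial and $c_j(t) := \partial^j\log M_0(t,q)|_{q=1}$ is the $j$-th cumulant. I will determine the behaviour of each $c_j(e^z)$ near $z=0$ by Mellin inversion, and then isolate the unique Bell monomial yielding the advertised $z^{-2k}$ leading term.

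To compute the cumulants, note that since $\partial u = bu$ for $u=q^bt^m$, an easy induction yields $\partial^j\log(1-q^bt^m)|_{q=1} = -b^j R_j(t^m)$ with $R_j(u) := \sum_{\ell\ge 1}\ell^{j-1}u^\ell$. Therefore
\[
c_j(t) \;=\; \sum_{m\ge 1}S_j(m)\,R_j(t^m),\qquad S_j(m) := \sum_{r=0}^{m-1}(2r+1-m)^j.
\]
The reflection $r\mapsto m-1-r$ negates $2r+1-m$, so $S_j(m)=0$ whenever $j$ is odd; hence $c_j\equiv 0$ for $j$ odd (the same symmetry responsible for the vanishing odd moments noted after the statement of Theorem \ref{main}). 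For $j$ even, $S_j$ is an odd polynomial in $m$ of degree $j+1$ with leading coefficient $1/(j+1)$ (for instance via the Bernoulli-polynomial identity $S_j(m)=\frac{2^{j+1}}{j+1}B_{j+1}((m+1)/2)$).

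For the asymptotic as $z\to 0$, set $s:=-z$, so that $R_j(e^{-ms}) = \sum_n n^{j-1}e^{-mns}$ and
\[
\int_0^\infty s^{\sigma-1}c_j(e^{-s})\,ds \;=\; \Gamma(\sigma)\,L_j(\sigma)\,\zeta(\sigma+1-j), \qquad L_j(\sigma):=\sum_{m\ge 1}\frac{S_j(m)}{m^\sigma},
\]
where $L_j$ is a finite linear combination of shifted Riemann zetas. The rightmost pole of the integrand lies at $\sigma=j+2$ (from the $m^{j+1}/(j+1)$ term of $S_j$) with residue $(j+1)!\cdot\frac{1}{j+1}\cdot\zeta(3)=j!\,\zeta(3)$. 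Displacing the Mellin inversion contour past this single pole yields
\[
c_j(e^z) \;=\; \frac{j!\,\zeta(3)}{z^{j+2}} \;+\; O\bigl(|z|^{-j}\log(1/|z|)\bigr),
\]
uniformly on the major arc (for $j$ even, $(-z)^{j+2}=z^{j+2}$); the logarithmic factor is produced by the next, double, pole at $\sigma=j$. In particular $c_2(t)\sim 2\zeta(3)\,z^{-4}$.

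Assembling the Bell polynomial, the vanishing of odd cumulants forces only partitions $\lambda=(j_1,\ldots,j_r)\vdash k$ into even parts to contribute, and the corresponding monomial has leading order $\prod_i |z|^{-(j_i+2)}=|z|^{-(k+2r)}$. This is maximised \emph{uniquely} at $\lambda=(2^{k/2})$, at which the Bell coefficient is $k!/((k/2)!\,2^{k/2})=(k-1)!!$, giving precisely the claimed leading term $(k-1)!!\,(2\zeta(3))^{k/2}z^{-2k}$. Every other admissible partition has $r\le k/2-1$ and so contributes only $O(|z|^{-(2k-2)})=O(N^{2k-2})$, as do the subleading Mellin contributions to $c_2(t)^{k/2}$ (up to a logarithmic factor absorbed into the constant $K$ for large $N$). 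The chief technical obstacle is to make the Mellin expansion valid \emph{uniformly} on the complex major arc, where $s=-z$ has phase in some sector $|\arg s|<\pi/2-\delta$: one handles this by shifting the inversion contour onto a fixed vertical line to the left of $\sigma=j+2$ and bounding the remainder via Stirling's exponential decay $|\Gamma(\sigma_0+i\tau)|\asymp |\tau|^{\sigma_0-1/2}e^{-\pi|\tau|/2}$ against the polynomial growth of $\zeta$ and $L_j$ along that line.
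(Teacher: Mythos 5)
Your proof is correct and follows the same core strategy as the paper: analyze the singularity of the $F_k$ at $z=0$ by Mellin transform/residue calculus, and then identify the $(k-1)!!$ prefactor via the combinatorics of pairings. Where the paper's proof is sketchy, yours is more systematic. The paper computes $F_2$ directly and then invokes ``Wick's theorem'' to write $F_{2r}=(2r-1)!!\,F_2^r+G_{2r}$ with $G_{2r}$ having poles of strictly lower order, but without actually verifying the latter. Your Faà di Bruno/complete Bell polynomial/cumulant formulation makes this precise: you compute the cumulants $c_j=\partial^j\log M_0|_{q=1}=\sum_m S_j(m)R_j(t^m)$ explicitly, observe the vanishing for odd $j$ by the reflection symmetry of $S_j$, obtain the leading singularity $c_j\sim j!\,\zeta(3)z^{-j-2}$ from the rightmost pole of $\Gamma(\sigma)L_j(\sigma)\zeta(\sigma+1-j)$ at $\sigma=j+2$, and then check that the Bell monomial corresponding to the all-twos partition $(2^{k/2})$ uniquely attains the maximal order $z^{-2k}$ with multinomial coefficient exactly $(k-1)!!$. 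The Bernoulli-polynomial identity for $S_j$ and the oddness-in-$m$ observation (which controls where $L_j$ can have poles) are nice clean additions that the paper leaves implicit. Your closing remark about uniformity of the Mellin expansion on the complex sector is also a legitimate technical point that the paper glosses over.

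One small caveat, which you flag yourself but slightly mis-handle and which is also present in the paper: the next pole of the Mellin integrand for $c_j$ at $\sigma=j$ is double (the $\zeta(\sigma+1-j)$ pole coinciding with the $\zeta(\sigma-(j-1))$ pole inside $L_j$), so the second-order term in $c_j$ is $O(|z|^{-j}\log(1/|z|))$, and consequently the remainder in $F_k$ is actually $O(N^{2k-2}\log N)$, not $O(N^{2k-2})$. A $\log N$ factor cannot literally be ``absorbed into the constant $K$''; the lemma as stated, and the paper's Example \ref{exF2} residue (which drops the $\log z$ and misreports the constant), are both off by this factor. This is harmless downstream — all that matters in the circle-method estimate is that the error is $o(N^{2k})$ — but it would be cleaner to either include a $\log N$ in the bound or note explicitly that this does not affect the final asymptotics.
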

\begin{proof} First let us consider the case $k=2$ used to compute the variance. By what we saw in Example \ref{exF2} above
\[ F_2(t) = \frac{1}{3} \sum_{m\geq 1} \frac{m(m^2-1)e^{mz}}{(1-e^{mz})^2} .\]
By using the Mellin transform $e^{-\tau} = \frac{1}{2\pi i }\int_{\sigma - i\infty}^{\sigma + i\infty}\Gamma(s)\tau^{-s}ds$ we can express this function as an integral,
\begin{eqnarray*}
F_2(t) & = & \frac{1}{3}\sum_{m\geq 1} \frac{m(m^2-1)e^{mz}}{(1-e^{mz})^2}
=  \frac{1}{3}\sum_{m\geq 1} \sum_{i\geq 1} m(m^2-1) i e^{imz} \\
& = &  \frac{1}{6i\pi}\int_{\sigma - i \infty}^{\sigma + i \infty} \Gamma(s)\sum_{m\geq 1} \sum_{i\geq 1} \frac{m (m^2-1) i }{ (imz)^{s}} ds \\
& = &  \frac{1}{6i\pi}\int_{\sigma - i \infty}^{\sigma + i \infty} \Gamma(s) z^{-s} \zeta(s-1)(\zeta(s-3) -\zeta(s-1)) ds.
\end{eqnarray*}
This integrand has a double pole at $s=2$ and a simple pole at $s=4$ coming from the Riemann zeta function while the gamma function contributes simple poles at $s=0,-1,-2,\ldots$ . Doing the residue calculus we see that 
\[ F_2(t) = \frac{\Gamma(4) \zeta(3)}{3}  z^{-4} -\frac{2\gamma \Gamma(2) \zeta(-1)}{3}z^{-2} + O(1) \]
where $\gamma $ is Euler's constant coming from the Laurent expansion of the zeta function about $s=1$. 

Computing the higher order derivatives is essentially an application of Wick's theorem. Indeed if $k=2r$ then we have 
\[ F_{k=2r}(t) = (k-1)!! (F_2(t))^r + G_k(t)  \]
where the combinatorial coefficient $(k-1)!!$ comes from all possible ways of pairing the $2r$ differentials $\partial^{2r}$. Using the product rule for differentiation we see the first term appearing. The remaining terms in $G_k(t)$ come from the other terms generated in the product rule. Computing their Mellin transform shows the poles they contribute are of order at least two less. 
\end{proof}

\begin{lem}\label{minorF} Given k even, there exists positive constants $N_0,C_k,A_k$ such that for all $N>N_0$ and $t=e^z\in C''_N$ along the minor arc we have
\[ \left| F_k(t) \right| <  C_kN^{A_k}.\]
\end{lem}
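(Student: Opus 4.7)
The plan is to combine a structural description of $F_k(t)$ as a finite sum of finite products of a few elementary series with a crude triangle-inequality bound. A pleasant feature of this argument is that the resulting estimate depends only on $|t|=e^{-1/N}$ and not on $\arg t$, so it in fact holds on all of $C_N$; in particular the minor-arc hypothesis is not really used (in contrast to Wright's Lemma~\ref{minorM} for $M(t)$ itself, whose bound genuinely relies on cancellation far from $t=1$).

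For the structural step, set $c_j := \partial^j \log M_0 \big|_{q=1}$. Iterating $\partial = q\,d/dq$ on $-\log(1-q^{2l+1-m}t^m)$ shows by induction that
\[ \partial^j \log M_0 \;=\; \sum_{m\geq 1}\sum_{l=0}^{m-1} (2l+1-m)^j\, R_j\!\bigl(q^{2l+1-m}t^m\bigr), \]
where $R_j(u)=u\,P_j(u)/(1-u)^j$ is a universal rational function ($R_1=u/(1-u)$, $R_2=u/(1-u)^2$, $R_3=u(1+u)/(1-u)^3$, etc.). Setting $q=1$ and using the symmetry $l\mapsto m-1-l$, the inner sum $\sum_{l=0}^{m-1}(2l+1-m)^j$ vanishes for odd $j$ and is a polynomial in $m$ of degree $j+1$ for even $j$. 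Consequently, for $j$ even, $c_j$ is a finite linear combination of series of the form
\[ A_{P,r,s}(t) := \sum_{m\geq 1} \frac{P(m)\, t^{mr}}{(1-t^m)^s}, \qquad 1\leq s\leq j,\; r\geq 1, \]
with $P$ a polynomial. Since $c_1=0$ (as $\sum_{l=0}^{m-1}(2l+1-m)=0$) and $\partial^k M_0/M_0$ is the $k$-th complete Bell polynomial in $(c_1,c_2,\ldots,c_k)$, the quotient $F_k = \partial^k M_0|_{q=1}/M(t)$ is itself a finite sum of finite products of series $A_{P,r,s}$.

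For the analytic step, expand $(1-t^m)^{-s}=\sum_{n\geq 0}\binom{n+s-1}{s-1}t^{mn}$; since all coefficients are nonnegative, the triangle inequality yields
\[ |A_{P,r,s}(t)| \;\leq\; \sum_{m\geq 1} |P(m)|\,\frac{|t|^{mr}}{(1-|t|^m)^s}. \]
With $|t|=e^{-1/N}$ and the elementary inequality $1-e^{-m/N}\geq \min(m/(2N),\,1/2)$ for $N$ large, splitting the sum at $m=N$ gives a head $m\leq N$ bounded by $O\bigl(N^s\sum_{m\leq N}|P(m)|m^{-s}\bigr)=O(N^{A'})$ for some $A'$ depending on $\deg P$ and $s$, and a tail $m>N$ bounded by $O(1)$ via exponential decay of $|t|^{mr}$. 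A finite sum of finite products of such polynomially bounded series is again polynomially bounded, so $|F_k(t)|\leq C_kN^{A_k}$ uniformly on $C_N$, and in particular on $C''_N$.

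The main (though fairly routine) obstacle is the bookkeeping in the structural step: identifying which monomials in the $c_j$'s appear in $F_k$, and tracking the resulting degrees $\deg P$ and orders $s$ in each factor $A_{P,r,s}$. Since no explicit control of $A_k$ is required by the statement, one can afford to be generous with these exponents.
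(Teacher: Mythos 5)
Your proposal follows essentially the same strategy as the paper: reduce $F_k$ to a finite sum of finite products of elementary series $\sum_m \frac{P(m)\,t^{mr}}{(1-t^m)^s}$, pass to absolute values (which is permitted because the coefficients in the $(1-t^m)^{-s}$ expansion are nonnegative), and then bound the resulting real series of $|t|=e^{-1/N}$ polynomially in $N$. The paper executes the last step via the Mellin transform; you use a direct split of the sum at $m=N$. That substitution is legitimate and more elementary, and your structural step via Bell polynomials (noting $c_1=0$ and that odd logarithmic derivatives vanish at $q=1$) makes precise the paper's informal assertion that $F_k$ ``ultimately'' is such a finite combination. Your observation that the estimate holds on all of $C_N$, not just the minor arc, is also correct and worth stating.

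One quantitative slip: the tail $m>N$ is \emph{not} $O(1)$. With $|t|^{mr}=e^{-mr/N}$, the ratio between consecutive terms is $e^{-r/N}\to 1$, so the geometric decay is slow; for example $\sum_{m>N}e^{-mr/N}\sim N e^{-r}/r$, and more generally $\sum_{m>N}m^{d}e^{-mr/N}=O(N^{d+1})$. This does not damage the conclusion, since $O(N^{d+1})$ is still polynomial in $N$ and one is free to be generous with the exponent $A_k$, but the justification ``exponential decay gives $O(1)$'' is wrong as stated and should be replaced by the correct $O(N^{d+1})$ estimate (for instance by comparing with $\int_N^\infty x^d e^{-xr/N}\,dx$).

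Also, the elementary inequality $1-e^{-m/N}\geq\min(m/(2N),\,1/2)$ holds for all $N>0$ (not merely $N$ large), since $1-e^{-x}\geq x/2$ on $[0,1]$ and $1-e^{-1}>1/2$; this is a cosmetic point but removes the unnecessary hypothesis.
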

\begin{proof} Looking at the definition of $F_k(t)$ we see that ultimately it can be written as a finite sums and products of series like 
\[ \sum_{n\geq 1} \frac{ n^a t^{nc} }{(1-t^n)^b}  \]
where $a,b,c \in \mathbb{N}$. Each of these sums can be bounded like 
\[ \left|\sum_{n\geq 1} \frac{ n^a t^{nc} }{(1-t^n)^b} \right |\leq\sum_{n\geq 1} \frac{ n^a |t|^{n} }{(1-|t|^n)^b}\]
using the Mellin transform as in the previous lemma gives bounds on this sum like $C_{a,b}N^{A_{a,b}}$ where $|t| = e^{-1/N}$ and $A_{a,b}, C_{a,b}$ are constants depending only on $a,b$. In total this gives the polynomial bounds claimed.
\end{proof}
Now we have got bounds on the series $M_k(t)$ along the major and minor arcs we are ready to estimate the Cauchy integral. Let us define the two quantities we are interested in computing
\begin{eqnarray*} I'_{k,n} & = & \frac{1}{2\pi i }\int_{C'_N} t^{-n-1}M_k(t) dt, \\ 
I''_{k,n} & = & \frac{1}{2\pi i }\int_{C''_N} t^{-n-1}M_k(t) dt .
\end{eqnarray*}
From now onwards we choose $N = (n/2\zeta(3))^{1/3}$ so that $n=2\zeta(3)N^3$. Now we can get a bound on the integral along the minor arc that is sufficient to show this integral is negligible:
\begin{lem} Given $k$ even then for all $\epsilon >0$ there exists positive constants $N_0,K,A$ such that for all $N>N_0$ we have 
\[ |I''_{k,n}| < K\cdot N^A \cdot e^{(-\frac{1}{2} + \epsilon)N^2} \cdot e^{3\zeta(3)N^2}. \]
\end{lem}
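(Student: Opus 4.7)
The plan is to combine the factorization $M_k(t) = F_k(t) \cdot M(t)$ with the pointwise bounds already established on the minor arc (Lemma \ref{minorM} for $M$ and Lemma \ref{minorF} for $F_k$), and then estimate the Cauchy integral trivially as arc-length times supremum of the integrand.

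First I observe that on $C_N$ we have $|t| = e^{-1/N}$, so
\[ |t^{-n-1}| = e^{(n+1)/N} = e^{n/N}\, e^{1/N}. \]
With the chosen normalization $n = 2\zeta(3) N^3$, this gives $|t^{-n-1}| = e^{2\zeta(3)N^2}\, e^{1/N}$, and the $e^{1/N}$ factor can be absorbed into a constant for $N$ large. Multiplying this by the minor-arc bounds from Lemmas \ref{minorM} and \ref{minorF} yields, for any prescribed $\epsilon > 0$ and $N > N_0$,
\[ |t^{-n-1} M_k(t)| \;<\; C_k N^{A_k}\, e^{2\zeta(3) N^2}\, e^{(\zeta(3) - 1/2 + \epsilon) N^2}\, e^{1/N} \;=\; C_k' N^{A_k}\, e^{(-1/2 + \epsilon)N^2}\, e^{3\zeta(3) N^2}. \]

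Finally, since the total length of $C''_N$ is bounded above by the full circumference $2\pi e^{-1/N} \leq 2\pi$, the standard $ML$-estimate for a contour integral gives
\[ |I''_{k,n}| \;\leq\; \frac{1}{2\pi} \cdot 2\pi \cdot \sup_{t \in C''_N}|t^{-n-1}M_k(t)| \;\leq\; K N^A\, e^{(-1/2 + \epsilon) N^2}\, e^{3\zeta(3) N^2}, \]
with $A = A_k$ and a suitable $K$, which is exactly the claimed bound. No substantive obstacle arises: all the analytic content has been packaged into the two preceding lemmas, and what remains is bookkeeping. The conceptual point is simply that the gain $e^{-N^2/2}$ in Wright's minor-arc bound for $M$ is preserved under multiplication by the polynomial-in-$N$ factor $F_k$, ensuring that the minor-arc contribution is exponentially smaller than the $e^{3\zeta(3)N^2}$ growth coming from $t^{-n-1}$, so that in the end only the major arc will contribute to the asymptotics of the $n$th Maclaurin coefficient of $M_k$.
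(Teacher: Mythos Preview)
Your proof is correct and follows essentially the same route as the paper: factor $M_k = F_k\cdot M$, apply the minor-arc bounds from Lemmas~\ref{minorM} and~\ref{minorF} together with $|t^{-n-1}| = e^{(n+1)/N}$, use the trivial length-times-supremum estimate, and substitute $n = 2\zeta(3)N^3$. Your handling of the $e^{1/N}$ factor and the arc-length bound is slightly more explicit than the paper's, but the argument is the same.
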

\begin{proof} Using Lemmas \ref{minorM} and \ref{minorF} we get
\begin{eqnarray*}
|I''_{k,n}| & = &\left| \frac{1}{2\pi i }\int_{C''_N}F_k(t)\cdot M(t) \cdot t^{-n-1} dt \right| \\
 & \leq & \frac{2\pi}{|2\pi i |} \cdot \sup_{C''_N}( |F_k(t)|) \cdot \sup_{C''_N}( |M(t)|) \cdot  \sup_{C''_N}( |t^{-n-1}|)\\
  & \leq & K \cdot N^A \cdot e^{(\zeta(3) -1/2 +\epsilon)N^2} \cdot e^{(-n-1) N^{-1} }
\end{eqnarray*}
substituting $n=2\zeta(3)N^3$ gives the result.
\end{proof}
From now all that remains is to estimate the integral $I'_{n,k}$. Combining Lemma \ref{majorM} and Lemma \ref{majorF} we have
\begin{eqnarray*}
I'_{n,k} & = & \frac{1}{2\pi i} \int_{  C'_N } F_k(t)M(t)t^{-n-1} dt \\
             & = & \frac{e^{\zeta'(-1)}}{2\pi i} \frac{(k-1)!!}{(2\zeta (3))^{k/6}} \int_{(1-i)/N}^{(1+i)/N} z^{- 2k+1/12 }e^{\frac{\zeta(3)}{z^2} + 2\zeta(3) N^3 z }dz \\ 
             & & \hspace{4.5cm}+ \hspace{0.5cm} \mathcal{O}( N^{-1 +2k -1/12 - 2}e^{3\zeta(3)N^2})\\
             & = & \frac{(k-1)!! N^{2k}}{(2\zeta (3))^{k/6}} \cdot \frac{e^{\zeta'(-1)}N^{-1/12-1}}{2\pi i} \int_{1-i}^{1+i} v^{- 2k+1/12 }e^{ \zeta(3)N^2(2v + v^{-2})}dv \\ 
             & & \hspace{4.5cm}+ \hspace{0.5cm} \mathcal{O}( N^{-1 +2k -1/12 - 2 }e^{3\zeta(3)N^2})              
\end{eqnarray*}
where we set $v=Nz$. 

Notice the prefactor in the above expression is essentially the term we are looking for. From now on we work with the integral
\[ P'_{n,k} := \frac{1}{2\pi i} \int_{1-i}^{1+i} v^{- 2k+1/12 }e^{ \zeta(3)N^2(2v + v^{-2})}dv  \]
basically it will be enough to show that, for large $N$, this is independent of $k$.  Then in the limit we have $I'_{n,k} =  \frac{(k-1)!! N^{2k}}{(2\zeta (3))^{k/6}} \cdot I'_{n,0}$. We are able to achieve this by localizing the integral $P'_{n,k}$ to an even smaller arc using the method of steepest descents. 

In the exponent of the integrand we have the function $2v+v^{-2}$. Roughly speaking the integrand will be largest when this exponent is real. The curve of steepest descent is defined to be the real curve given by $\im( 2v+v^{-2} ) = 0$, specifically taking $v = X + iY$ we have
\[ (X^2+Y^2)^2 = X. \]
This is the closed curve $\mathcal{C}$ meeting the lines $X=0$ at $(0,0)$, $X=1$ at $(1,0)$, $X=-Y$ at $D= (2^{-\frac{2}{3}},-2^{-\frac{2}{3}})$, and $X=Y$ at $E = (2^{-\frac{2}{3}},2^{-\frac{2}{3}})$ as seen in Figure \ref{steep}. As in \cite{w} we consider an alternative integral along this curve $\mathcal{C}$ rather than along the straight line $FG$. 

Making a branch cut from $0$ to $-\infty$ along the real axis we consider the value of $v^{1/12}$ which is real and positive at $v=1$ and take the contour for $\mathcal{C}$ parameterized in the anti-clockwise direction. Following Wright we define 
\[ \xi_k(v) = \frac{v^{-2k+1/12}}{2\pi i} e^{\zeta(3) N^2 (2v + v^{-2})} \textrm{ and } J'_{k,n} = \int_{\mathcal{C}}\xi_k(v) dv .\]

 \begin{figure}
 \includegraphics[scale = 0.25]{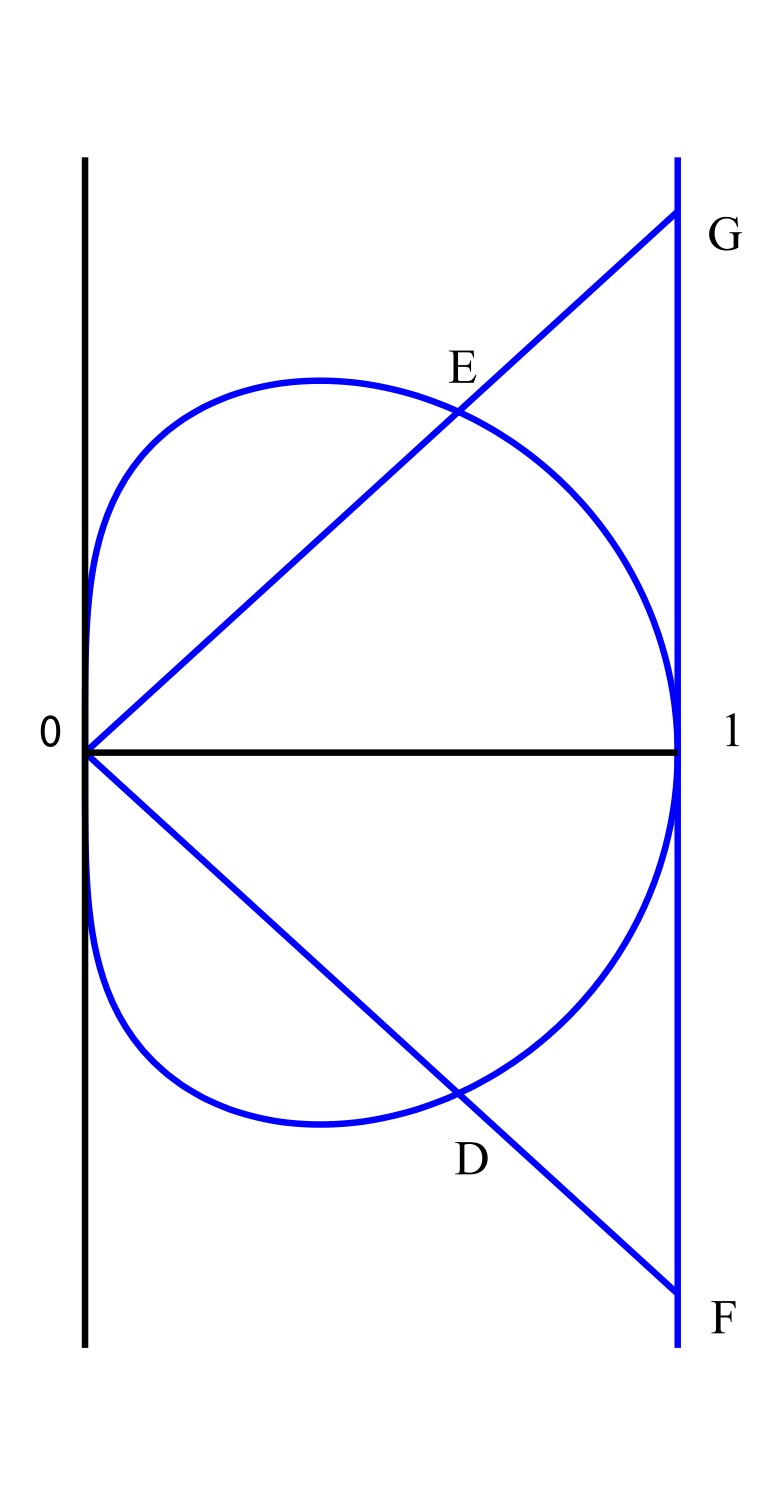}
 	\caption{Curve of steepest descent.}
	\label{steep}
\end{figure}

On the straight lines $EG,DF$ and along the arcs $OE$ and $OD$ we have good bounds on $\xi_k(v)$. Since here $\Re ( v^{-2})= (X^2-Y^2)/(X^2+Y^2)^2 \leq 0$ setting $Y= rX$ gives
\[ \left|v^{-2k +1/12}e^{\zeta(3)N^2( 2v + v^{-2})}\right| \leq {X^{-2k}}(1+r^2)^{-k} e^{\zeta(3)N^2\left(\frac{1-r^2}{(1+r^2)^{2}}X^{-2}\right)}  e^{2\zeta(3) N^2 X } . \]
When $r >  1$ this tends to zero as $X \to 0$, and when $r=1$ along the lines $EG,DF$ there are easy bounds. In summary, we have a bound $|\xi_k(t)| < K e^{2\zeta(3)N^2 X }$ along these contours $OE,OD,EC,$ and $DF$. Using contour integration to compare the original integral $P'_{n,k}$ to the new integral $J'_{n,k}$ along the curve $\mathcal{C}$ we see that
\[ \left| P'_{n,k} - J'_{n,k} \right| < \left|\int_{F}^D  \xi_k(v) dv\right| +\left|\int_{D}^0 \right| +\left|\int_{0}^E \right| +\left|\int_{E}^G \right| < Ke^{2\zeta(3)N^2} \]
this allows us to integrate along the curve of steepest descent instead. To parameterize this curve we choose
\[ t = -i\left( \frac{v-1}{v} \right) (2v + 1)^{\frac{1}{2}} \]
so that $t^2 = 3-2v - v^{-2}$. Now the problem transforms to an integral over the real line
\[ J_{n,k}' = e^{3\zeta(3) N^2} \int_{-\infty}^{\infty} \chi_k(t) e^{-\zeta(3) N^2t^2 } dt,  \textrm{ with } \chi_k(t) = \frac{v^{-2k +1/12}}{2\pi i} \frac{dv}{dt}.\]
The most serious piece of this integral is located about $t=0$ i.e. $v=1$.  To understand the behavior here we take a convergent power series $\chi_k(t) = \sum_{m=0}^\infty a_mt^m $ in a small neighborhood of $t=0$. Next observe that for some constant $K$ we have
\[ \frac{ dv }{dt} = \frac{iv^2(2v+1)^{\frac{1}{2}} }{ 1 + v + v^2 } \Rightarrow \left| \frac{v^{-2k +1/12}}{2\pi i} \frac{dv}{dt}\right| < K|v^{-2k}| \]
on all of the real line. Moreover for some possibly larger $K$ we have 
\[ | v^{-2k} | = | 3-2 v- t^2 |^k \leq Kt^{2k} \]
on the compliment of the above radius of convergence about $t=0$. All in all we get that 
\[ \left| \chi_k(t) - \sum_{m=0}^{2k+1} a_m t^m \right| < Kt^{2k + 2} \]
over the whole real line. Finally our integral is approximated by 
\[  J_{n,k}' = \sum_{m=0}^{2k+1} a_m\int_{-\infty}^{\infty} t^me^{-\zeta(3) N^2 t^2}dt + M_k \]
where
\[ \left| M_k \right| \leq K \int_{-\infty}^{\infty} t^{2k+2}e^{-\zeta(3) N^2 t^2}dt  < \frac{K}{N^{2k+3}} .\]
By symmetry all the above odd integrals are zero. The even ones are given by
\[ \int_{-\infty}^{\infty} t^{2m} e^{\zeta(3) N^2 t^2} dt  = \frac{\Gamma(m+\frac{1}{2})}{(\zeta(3) N^2)^{m+\frac{1}{2}}}. \]
So to get the leading order asymptotics we need only the constant term $a_0 = \frac{1}{2\pi \sqrt{3}}$ in the expansion of $\chi_k(t)$. In particular to leading order there is no dependence on $k$ as we claimed earlier. Substituting $N= (n/2\zeta (3) )^{1/3}$ gives the formula described at the end of Section 2.

\section{Final Remarks.}
Some remarks about the asymptotics of DT invariants coming from this investigation and math/physics literature.  
\\
\paragraph{\textbf{Asymmetry}.} Since the refined DT invariants are given by $M_3(t,q^{1/2})$ their distribution is shifted by the trace of the plane partition coming from $3X^0_n$. This shifting is relevant to the refined topological vertex in physics \cite{ikv}. This was discussed in \cite{mmns} where a minor discrepancy was noticed with calculations in \cite{dg} for the case of the refined DT invariants of the resolved conifold singularity. 
\\
\paragraph{\textbf{Dimension of moduli space}.}In his lecture notes on quiver moduli M. Reineke describes a conjecture on M. Douglas on the asymptotic growth of Euler numbers of spaces of representations of Kronecker quivers \cite{r}. Reineke proposes a generalization of the conjecture would give 
\[ \ln \left( \chi(M_d(Q)) \right) \sim C_Q\sqrt{ \dim(M_d(Q)) } \] 
where $M_d(Q))$ is a suitable smooth model for the quiver moduli, $d$ is a large dimension vector, and $C_Q$ is an interesting constant to be determined. 

For sheaves on a Calabi-Yau threefold the moduli spaces will be singular and not of the expected dimension. However in our case we do know that
\[C^-_{\C^3}\sqrt{ \dim(\Hilb^n(\C^3)}) <  \ln \left(| \#^{\textrm{vir}}(\Hilb^n(\C^3))| \right) <  C^+_{\C^3}\sqrt{ \dim(\Hilb^n(\C^3)})   \]
for large $n$, where $\#^{\textrm{vir}}$ is the virtual Euler number or numerical DT count for this moduli space, and $C^-_{\C^3},C^+_{\C^3}$ are constants. A proof of this follows from Wrights theorem \cite{w} for the virtual Euler number and Briancon and Iarrobino's asymptotics for the dimension of $\Hilb^n(\C^3)$ \cite{bi}. Geometrically this relationship between the virtual Euler number and dimension of the moduli space seems like a strange coincidence specific to $\C^3$?
\\ 
\paragraph{\textbf{Orbifold: $\C \times [\C^2/\mathbb{Z}_2]$}.} In the case of numerical DT invariants, Panario, Richmond, and Young investigated the orbifold $\C \times [\C^2/\mathbb{Z}_2]$, here the charge lattice is two dimensional and one studies the bivariate asymptotics of colored partitions see \cite{pry}. 
\\
\paragraph{\textbf{BPS black holes}.} A major goal of string theory is to unify quantum mechanics with Einstein's general relativity. In the 90's Strominger and Vafa showed that indeed the topological string described the physics of some BPS black holes in a certain limit of the string coupling constant \cite{sv}. Later with Ooguri they conjectured that in a certain limit the entropy of BPS black holes should be determined from the square of the topological string partition function \cite{osv}.

Recently in the case of D6 - D2/D0 states this conjecture has been developed by Denef and Moore \cite{dm}. If we let $X$ be a Calabi-Yau threefold and $I(X,\beta,n)$ be the moduli space of ideal sheaves with Chern character $(1,0,-\beta,n)$, their paper speculates that 
\[ \lim_{\gamma \to \infty} \left( \frac{\ln \left( \ln \left( |\#^\textrm{vir} (I(X,\lambda^2 \beta,\lambda^3 n)) | \right) \right)}{\ln (\lambda)}\right) = 2. \]
By Wright's theorem this is true when $\beta = 0$. In \cite{hkmt} physicists checked this conjecture for the quintic threefold. However as it is a hard problem to compute the higher genus Gromov-Witten invariants in this case these checks are only partial.
\section*{Acknowledgements.} 
Primarily I wish to thank T. Hausel for his hospitality in inviting me to EPFL and sharing the ideas that lead to this paper. Also thanks to J. Bryan, R. Pandharipande, and B. Young for their support and comments, and to M. Marcolli who was my mentor at MSRI where this paper was written. 

This paper was finished during a postdoc at MSRI in the spring $2013$ program Non-Commutative Algebraic Geometry and Representation Theory. Normally I am a postdoc at ETH Z\"{u}rich in the research group of R. Pandharipande sponsored by Swiss grant 200021143274.

Email : andrewmo@math.ethz.ch
\end{document}